\DeclarePairedDelimiter\ceil{\lceil}{\rceil}
\DeclarePairedDelimiter\floor{\lfloor}{\rfloor}
\begin{document}
	
	\title*{On the Scalability of the Parallel Schwarz Method in One-Dimension}
	
	\author{Gabriele Ciaramella, Muhammad Hassan and Benjamin Stamm}
	\authorrunning{G. Ciaramella, M. Hassan and B. Stamm}
	\institute{Gabriele Ciaramella \at Universit\"at Konstanz \email{gabriele.ciaramella@uni-konstanz.de}
		\and Muhammad Hassan \at RWTH Aachen University \email{hassan@mathcces.rwth-aachen.de}
		\and Benjamin Stamm \at RWTH Aachen University \email{stamm@mathcces.rwth-aachen.de}}
	%
	%
	\maketitle
	
	\abstract*{In contrast with classical Schwarz theory, recent results in computational chemistry have shown that for special domain geometries, the one-level parallel Schwarz method can be scalable. This property is not true in general, and the issue of quantifying the lack of scalability remains an open problem. Even though heuristic explanations are given in the literature, a rigorous and systematic analysis is still missing. In this short manuscript, we provide a first rigorous result that precisely quantifies the lack of scalability of the classical one-level parallel Schwarz method for the solution to the one-dimensional Laplace equation. Our analysis technique provides a possible roadmap for a systematic extension to more realistic problems in higher dimensions.}
	
	\section{Introduction and main results}
	An algorithm is said to be weakly scalable if it can solve progressively larger problems with an increasing number of processors in a fixed amount of time. According to classical Schwarz theory, the parallel Schwarz method (PSM) is not scalable (see, e.g., \cite{Hassan_mini_3_CiaramellaGander4, Hassan_mini_3_ToselliWidlund}).
	Recent results in computational chemistry, however, have shed more light on the scalability of the PSM: surprisingly, in contrast with classical Schwarz theory, the authors in \cite{Hassan_mini_3_Stamm3} provide numerical evidence that in some cases the one-level PSM converges to a given tolerance within the same number of iterations independently of the number $N$ of subdomains. This behaviour is observed if fixed-sized subdomains form a ``chain-like'' domain {such that the intersection of the boundary of each subdomain with the boundary of the global domain is non-empty}. This result was subsequently rigorously proved in \cite{Hassan_mini_3_CiaramellaGander,Hassan_mini_3_CiaramellaGander2,Hassan_mini_3_CiaramellaGander3} for the PSM and in \cite{Hassan_mini_3_CiaramellaGander4} for other one-level methods. On the other hand, this weak scalability is lost if the fixed-sized subdomains form a ``globular-type'' domain $\Omega$, where the boundaries of many subdomains lie in the interior of $\Omega$. The following question therefore arises: is it possible to quantify the lack of scalability of the PSM for cases where {individual subdomains are entirely embedded inside the global domain}? To do so, for increasing $N$ one would need to estimate the number of iterations necessary to achieve a given tolerance.
	
	Some isolated results in this direction do exist in the literature. For instance, in \cite{Hassan_mini_3_CiaramellaGander4} a heuristic argument is used to explain why in the case of the PSM for the solution of a 1D Laplace problem an unfortunate initialisation leads to a contraction in the infinity norm being observed only after a number of iterations proportional to $N$. Similarly, for a special choice of overlapping subdomains, an elegant result can be found in \cite{Hassan_mini_3_Gander2002} where the authors prove that the Schwarz waveform-relaxation, for the solution of the heat equation, contracts at most every $m+2$ iterations with $m$ being an integer representing the maximum distance of the subdomains from the boundary. Nevertheless, the literature does not contain a comprehensive study of this problem for a general decomposition. Furthermore, existing results unfortunately do not provide a systematic approach to build on and extend in order to cover more general settings. Our goal therefore has been to develop a \emph{framework} that can be applied to a broad class of overlapping subdomains, in multiple dimensions and containing sub-domains with an arbitrary number and type (double, triple, quadruple and so on) of intersections, such as for molecular domains in computational chemistry \cite{Hassan_mini_3_Stamm3}. 
	
	Of course, tackling this problem in a completely general setting is a daunting task. The purpose of the current article is to develop such a new framework and apply it to the PSM for the solution of a 1D Laplace problem as a 'toy' problem. The key elements of our proposed framework are
	\begin{itemize}
		\item the identification of an adequate norm for studying the properties of the Schwarz operator,
		\item the maximum principle,
		\item and the idea of tracking the propagation of the contraction towards the interior of
		the global domain $\Omega$.
	\end{itemize}
	
	Our expectation is that a framework based on these ingredients can then be systematically extended to more general decompositions of the domain {which can be quite complex in two and three dimensions}. We emphasise that most (but not all) of the results we prove are either known or intuitively clear. Our true contribution is the new \emph{analysis technique} that we introduce. On the one hand, this technique results in a deeper understanding of the method and leads to a sharper description of the contraction behaviour. On the other hand, the tools developed in this article also suggest a systematic roadmap to extend our results to more realistic problems in higher dimensions. {In principle, this can be done by carefully tracking the propagation of the contraction through the subdomains as attempted in this manuscript. Note however that the contraction behaviour for a 1D reaction-diffusion equation is completely different from that of the 1D Laplace equation. This can be proved as shown in \cite{Hassan_mini_3_CiaramellaGander4, Hassan_mini_3_CiaramellaGander} }.

	We first state our main results. We consider the Laplace equation in one-dimension. Let $L>0$, we must find a function $e \colon [0, L] \rightarrow \mathbb{R}$ that solves
	\begin{equation}\label{Hassan_mini_3_eq:1}
	(e)^{\prime \prime}(x)=0 \quad \forall ~x \in (0,L), \qquad e(0)=0, \qquad e(L)=0. 
	\end{equation}
	Clearly \eqref{Hassan_mini_3_eq:1} represents an error equation whose solution is trivially $e=0$.
	In order to apply the PSM to solve \eqref{Hassan_mini_3_eq:1},
	we consider a decomposition $\Omega = \cup_{j=1}^N \Omega_j$, where $\Omega_j:=(a_j,b_j)$ 
	with $a_1=0$, $b_1=a_1 +\ell$ and $a_{j+1}= j(\ell-\delta)$, $b_{j+1} = a_{j+1}+\ell$ for $j=1,\dots,N-1$.
	Here, $\ell>0$ is the length of each subdomain, $\delta>0$ the overlap, and it holds that $L=N\ell-(N-1)\delta$. Now, let $e_0 \colon \Omega \rightarrow \mathbb{R}$ be some initialization.
	The PSM defines the sequences $\{e^n_j\}_{n \in \mathbb{N}}$ by solving for each $n \in \mathbb{N}$ the sub-problems
	\begin{equation}\label{Hassan_mini_3_eq:3}
	(e^n_j)^{\prime \prime}(x) =0 \quad \forall ~x \in (a_j, b_j), \quad e^n_j(a_j)=e^{n-1}_{j-1}(a_j), \quad e^n_j(b_j)=e^{n-1}_{j+1}(b_j), 
	\end{equation}
	for each $j=2,\dots,N-1$ and 
	\begin{align*}
	(e^n_1)^{\prime \prime}(x) &=0 \quad \forall ~x \in (a_1, b_1), \qquad e^n_1(a_1)=0, &e^n_1(b_1)&=e^{n-1}_{2}(b_1), \\
	(e^n_N)^{\prime \prime}(x) &=0 ~~ \forall ~x \in (a_N, b_N), \qquad e^n_N(a_N)=e^{n-1}_{N-1}(a_N), &e^n_N(b_N)&=0. 
	\end{align*}
	
	Solving \eqref{Hassan_mini_3_eq:1} and \eqref{Hassan_mini_3_eq:3} 
	and defining for each $n \in \mathbb{N}$
	the vector $\bold{e}^n \in \mathbb{R}^{2N}$ as
	$$\bold{e}^n := \begin{bmatrix}
	e^n_1(a_1) &
	e^n_1(a_2) & 
	e^n_2(b_1) &
	e^n_2(a_3) &
	\cdots &
	e^n_j(b_{j-1})&
	e^n_j(a_{j+1})&
	\cdots &
	e^n_N(b_{N-1}) &
	e^n_N(b_{N})
	\end{bmatrix}^\top,$$ 
	it is possible to write the PSM iterations as
	$\bold{e}^{n+1}= T \bold{e}^n$.
	Here $T \in \mathbb{R}^{2N \times 2N}$ is a non-negative ($T_{j,k}\geq 0$), non-symmetric block tridiagonal matrix:
	\begin{equation*}
	T = \begin{bmatrix}
	0 & \widetilde{ T_2}&  0 &\cdots & 0 & 0 &0\\
	T_1 & 0 &  T_2 &\cdots & 0 & 0 & 0 \\
	\ddots & \quad\ddots & \quad\ddots & \quad\ddots & \ddots & \ddots & \ddots\\
	0 & 0&  0 &\cdots &  T_1 & 0 & T_2\\
	0 & 0&  0 &\cdots & 0 & \widetilde{ T_1} &0
	\end{bmatrix},
	\hspace*{-30mm}
	\begin{split}
	T_1 &= \begin{bmatrix}
	0 & 1-\frac{\delta}{\ell}\\
	0 & \frac{\delta}{\ell}
	\end{bmatrix}, 
	\; 
	\widetilde{ T_1} = \begin{bmatrix}
	0 & 1-\frac{\delta}{\ell}\\
	0 & 0
	\end{bmatrix}, \\
	\; 
	T_2 &= \begin{bmatrix}
	\frac{\delta}{\ell}& 0\\
	1-\frac{\delta}{\ell}& 0
	\end{bmatrix},
	\; 
	\widetilde{ T_2} = \begin{bmatrix}
	0 & 0\\
	1-\frac{\delta}{\ell} & 0
	\end{bmatrix}.
	\end{split}
	\end{equation*}
	We denote by $\| \cdot \|$ the usual infinity norm and the corresponding induced matrix norm. 
	Our goal is to analyze the convergence properties of the PSM sequence $\{e_j^n\}_{n \in \mathbb{N}}$ with respect to $\| \cdot \|$.
	Hence, we must study the properties of the matrix $ T$.
	Our main results are summarized in the following theorem.
	\begin{theorem}\label{Hassan_mini_3_thm:Main}
		Let $N \in \mathbb{N}$ be the number of subdomains in $\Omega$ and $\bold{1}_{N} \in \mathbb{R}^{2N}$ the vector whose elements are all equal to $1$. Then $\Vert  T^n \Vert = \Vert  T^n \bold{1}_{N}\Vert \leq 1$ for any $n\in\mathbb{N}$ and
		\begin{itemize}\itemsep0em
			\item[{\rm(a)}] $\Vert  T^{\ceil{\frac{N}{2}}}\Vert < 1$ and hence $\rho(T) < 1$, where $\rho( T)$ is the spectral radius of $ T$.
			\item[{\rm(b)}] $\Vert  T^{n+1} \Vert < \Vert  T^{n} \Vert$ if $N$ is even, and $\Vert  T^{n+2} \Vert < \Vert  T^{n} \Vert$ if $N$ is odd, for $n \geq \ceil*{\frac{N}{2}}$.
		\end{itemize}
	\end{theorem}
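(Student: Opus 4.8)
The plan is to work throughout with the nonnegative vectors $v^n:=T^n\mathbf 1_N$, for which $v^{n+1}=Tv^n$ computes each entry as a weighted average of the entries of $v^n$ with weights the corresponding row of $T$, and for which $\|T^n\|=\max_i(T^n\mathbf 1_N)_i=\|v^n\|$ because $T^n\ge0$. Reading off $T_1,\widetilde{T_1},T_2,\widetilde{T_2}$, one checks that $T\ge0$, that every row of $T$ has sum $\le1$, that the sum equals $1$ exactly on the $2(N-2)$ ``interface rows'' — those of $e^n_j(b_{j-1})$ and $e^n_j(a_{j+1})$ for $2\le j\le N-1$, i.e.\ the rows with index in $\{3,\dots,2N-2\}$ — and that the four remaining rows have sums $0,\,1-\tfrac{\delta}{\ell},\,1-\tfrac{\delta}{\ell},\,0$. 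Hence $0\le T\mathbf 1_N\le\mathbf 1_N$, and since $T\ge0$ is order-preserving the sequence $\{v^n\}$ is componentwise non-increasing and stays in $[0,\mathbf 1_N]$; therefore $\|T^n\|=\|v^n\|=\|T^n\mathbf 1_N\|\le1$.

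For part (a) the plan is to show by induction on $n$ that $v^n_i=1$ exactly when $2n+1\le i\le 2N-2n$, and $v^n_i<1$ otherwise (the cases $n=0,1$ are immediate). In the induction step, $v^{n+1}_i=\sum_k T_{ik}v^n_k$ is an average of total weight $\le1$, so $v^{n+1}_i=1$ is possible only if the $i$-th row of $T$ sums to $1$ (hence $i\in\{3,\dots,2N-2\}$) \emph{and} $v^n_k=1$ for every $k$ with $T_{ik}>0$; the block-tridiagonal structure — the pair of entries indexed $2j-1,2j$ of $v^{n+1}$ depends precisely on the entries $2j-2$ and $2j+1$ of $v^n$ — then moves the block of unit entries inward by two indices at each end, which is the claim. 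For $n=\ceil{N/2}$ the range $2n+1\le i\le 2N-2n$ is empty, so $\|T^{\ceil{N/2}}\|=\|v^{\ceil{N/2}}\|<1$; and $\rho(T)^{\ceil{N/2}}=\rho\!\big(T^{\ceil{N/2}}\big)\le\|T^{\ceil{N/2}}\|<1$ gives $\rho(T)<1$.

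For part (b) (assuming $0<\delta<\ell$) I would record three facts. (i) Rows $1,2N$ and columns $1,2N$ of $T$ vanish, so $v^n_1=v^n_{2N}=0$ for $n\ge1$; moreover each row of $T$ with index in $\{2,\dots,2N-1\}$ has a positive entry in some column of $\{2,\dots,2N-1\}$, so starting from $v^1$, which is strictly positive on $\{2,\dots,2N-1\}$, every $v^n$ is strictly positive there, whence $\|T^n\|=\|v^n\|>0$. (ii) The decomposition and $v^1$ are invariant under the reflection $i\mapsto 2N+1-i$, so $v^n_i=v^n_{2N+1-i}$ for all $n$. (iii) \emph{(key lemma)} for $n\ge\ceil{N/2}$ the maximum of $v^n$ is attained only at the four central indices, $A_n:=\{\,i:v^n_i=\|v^n\|\,\}\subseteq\{N-1,N,N+1,N+2\}$; I would obtain this from a unimodality property of $v^n$ — each of its odd- and even-indexed subsequences is non-decreasing toward the centre (strictly so once the two contraction fronts of part (a) have met) — propagated by induction in $n$ from the block updates $v^{n+1}_{2j-1}=(1-\tfrac{\delta}{\ell})v^n_{2j-2}+\tfrac{\delta}{\ell}v^n_{2j+1}$ and $v^{n+1}_{2j}=\tfrac{\delta}{\ell}v^n_{2j-2}+(1-\tfrac{\delta}{\ell})v^n_{2j+1}$.

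Granting (i)--(iii), let $n\ge\ceil{N/2}$ and suppose $\|v^{n+1}\|=\|v^n\|=:M$. As $v^{n+1}=Tv^n\le v^n$, some index $i_0$ has $v^{n+1}_{i_0}=M$, and the averaging description forces $i_0$ to be an interface index and both indices on which $v^{n+1}_{i_0}$ depends to lie in $A_n$. These two indices differ by exactly $3$ and have opposite parity, and the only such pair inside $\{N-1,N,N+1,N+2\}$ is $\{N-1,N+2\}$; this forces $i_0=N$ or $i_0=N+1$, and either way $N$ must be odd. Hence for $N$ \emph{even} we get a contradiction and $\|T^{n+1}\|<\|T^n\|$. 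For $N$ \emph{odd}: if $\|v^{n+1}\|<\|v^n\|$ then $\|v^{n+2}\|\le\|v^{n+1}\|<\|v^n\|$; if instead $\|v^{n+1}\|=\|v^n\|$, the entries of $v^{n+1}$ equal to $M$ are precisely the interface indices whose two dependencies lie in $A_n\subseteq\{N-1,\dots,N+2\}$, which by the same pairing are only $N$ and $N+1$, so $A_{n+1}\subseteq\{N,N+1\}$; since no interface index can have both its dependencies inside the diameter-$1$ set $\{N,N+1\}$, we get $\|v^{n+2}\|<\|v^{n+1}\|=\|v^n\|$. In both cases $\|T^{n+2}\|<\|T^n\|$. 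The one genuinely delicate point is the key lemma (iii): away from the centre the unimodality is propagated using only that a weighted average of two reals lies between them, but at the centre the comparison $v^n_{N-1}$ versus $v^n_N$ reverses according to whether $\tfrac{\delta}{\ell}<\tfrac12$ or $\tfrac{\delta}{\ell}>\tfrac12$ (and is an equality when $\tfrac{\delta}{\ell}=\tfrac12$), so the monotonicity invariant carried through the induction must be stated with this small case distinction — this is where I expect the main effort to lie.
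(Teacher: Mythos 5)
Your proposal is correct and follows essentially the same route as the paper: reduction to $\mathbf{1}_N$ via non-negativity of $T$, an induction showing the block of unit entries of $T^n\mathbf{1}_N$ shrinks inward by one subdomain per iteration at each end (the paper's sets $V^n$), and then a monotonicity-toward-the-centre lemma combined with the reflection symmetry to localise the maximiser near the central subdomains and force strict contraction (every step for $N$ even, every second step for $N$ odd). Your part (b) is phrased as a parity/distance argument on the dependency pair $\{2j-2,2j+1\}$ rather than the paper's explicit chain of inequalities, but it rests on the same key unimodality lemma (the paper's Lemmas \ref{Hassan_mini_3_lem:5} and \ref{Hassan_mini_3_cor:2}), which you, like the paper, only sketch.
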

	
	Theorem \ref{Hassan_mini_3_thm:Main} states clearly that the PSM converges. {Moreover, it identifies $\bold{1}_{N}$ as the unit vector that maximises the $\ell_{\infty}$ operator norm of the iteration matrices $T^n, n \in \mathbb{N}$}.  This fact is then used to prove Theorem \ref{Hassan_mini_3_thm:Main} {\rm(b)}: if initialized with $\bold{1}_{N}$, after $\ceil{\frac{N}{2}}$ iterations the PSM sequence contracts in the infinity norm at \emph{every iteration} if $N$ is even, or \emph{every second iteration} if $N$ is odd. Although proven for a 1-D problem, this result is much sharper than the one found in \cite{Hassan_mini_3_Gander2002}, {which states that the PSM sequence contracts in the infinity norm at most every $\ceil*{\frac{N}{2}}$ iterations}. To prove Theorem \ref{Hassan_mini_3_thm:Main} {\rm(b)} we use Lemmas \ref{Hassan_mini_3_lem:5} and \ref{Hassan_mini_3_cor:2}. These two technical results characterize precisely the shape of the vector $\bold{e}^{n}=  T^n \bold{1}_N$ at every iteration $n$ and clearly show how the contraction propagates from the two points of $\partial \Omega$ towards the subdomains in the middle of $\Omega$. 
	
	We prove Theorem \ref{Hassan_mini_3_thm:Main} in the following sections. In particular, in Section \ref{Hassan_mini_3_subs:1} we prove
	first that $\Vert  T^n \Vert= \Vert  T^n \bold{1}_N \Vert$ and then Theorem \ref{Hassan_mini_3_thm:Main} (a).
	In Section \ref{Hassan_mini_3_subs:2} we prove Theorem \ref{Hassan_mini_3_thm:Main} (b). Notice that using Theorem \ref{Hassan_mini_3_thm:Main}, one could also estimate the spectral radius of $ T$ as $$\rho( T) \leq \Vert  T^{\ceil{\frac{N}{2}}}\Vert^{1/\ceil{\frac{N}{2}}} =\Bigl[ 1 - \Bigl(\frac{\delta}{L}\Bigl)^{\ceil{\frac{N}{2}}} \Bigl]^{1/\ceil{\frac{N}{2}}}.$$ This can be proved by a direct calculation involving geometric arguments. However, since this is a quite conservative bound, we will not prove this result in this short article.
	
	\vspace{-0.5cm}
	\section{Proof of Theorem 1 $(\rm{a})$}\label{Hassan_mini_3_subs:1}
	
	In what follows, we use
	$P =\begin{bmatrix}
	0& 1\\
	1 & 0
	\end{bmatrix}$.
	Moreover, let $m \in \mathbb{N}$ and $\bold{v}, \bold{w} \in \mathbb{R}^m$, then 
	$\bold{v} < \bold{w}$ (resp. $\bold{v} > \bold{w}$) means that $v_i < w_i$ (resp. $v_i > w_i$) for each $i \in \{1, \ldots, m\}$.

	\begin{lemma}\label{Hassan_mini_3_lemma:1}
		For all $n \in \mathbb{N}$ it holds that $\Vert  T^n \Vert= \Vert  T^n \bold{1}_N \Vert$.
	\end{lemma}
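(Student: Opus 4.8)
The plan is to exploit two elementary facts: that the induced operator norm $\Vert\cdot\Vert$ is the maximum absolute row sum, and that $T$ — hence every power $T^n$ — has only non-negative entries. Since $\Vert\bold{1}_N\Vert = 1$, submultiplicativity of the induced norm immediately gives $\Vert T^n \bold{1}_N\Vert \le \Vert T^n\Vert\,\Vert\bold{1}_N\Vert = \Vert T^n\Vert$, so the only content of the lemma is the reverse inequality $\Vert T^n\Vert \le \Vert T^n \bold{1}_N\Vert$.

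For this, I would first record that $T$ is entrywise non-negative. This can be read directly off the displayed block structure of $T$: since $0 < \delta < \ell$, the scalars $\frac{\delta}{\ell}$ and $1-\frac{\delta}{\ell}$ both lie in $[0,1]$, so each of the blocks $T_1, \widetilde{T_1}, T_2, \widetilde{T_2}$, and therefore $T$ itself, has non-negative entries. Because a product of entrywise non-negative matrices is again entrywise non-negative, $T^n \ge 0$ entrywise for every $n \in \mathbb{N}$.

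Now fix $n$ and let $r_i := \sum_{k=1}^{2N}(T^n)_{i,k}$ denote the $i$-th row sum of $T^n$. Non-negativity gives $\sum_{k}|(T^n)_{i,k}| = r_i \ge 0$, hence $\Vert T^n\Vert = \max_{i} r_i$. On the other hand, $(T^n \bold{1}_N)_i = \sum_{k}(T^n)_{i,k} = r_i \ge 0$ for every $i$, so $\Vert T^n \bold{1}_N\Vert = \max_i |(T^n\bold{1}_N)_i| = \max_i r_i$. Comparing the two expressions yields $\Vert T^n\Vert = \Vert T^n \bold{1}_N\Vert$, as claimed.

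There is essentially no serious obstacle here; the only point needing (minimal) care is the observation that non-negativity of $T$ is inherited by all of its powers. What really matters is the consequence: the lemma reduces the study of the operator norms $\Vert T^n\Vert$ — and, through Theorem \ref{Hassan_mini_3_thm:Main}(a), of $\rho(T)$ — to tracking the single explicit vector sequence $\bold{e}^n = T^n\bold{1}_N$, which is precisely the object analysed in the proof of part (b).
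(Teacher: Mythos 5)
Your proposal is correct and follows essentially the same route as the paper: both arguments use the non-negativity of $T$ (hence of $T^n$) to identify the maximum absolute row sum defining $\Vert T^n\Vert$ with the entries of $T^n\bold{1}_N$. The extra remarks you include (submultiplicativity for one direction, and reading off $T\ge 0$ from the blocks) are harmless refinements of the same idea.
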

	\begin{proof}
		Let $\bold{w}=  T^n \bold{1}_N$. Then for each $i \in \{1, \ldots, 2N\}$ it holds that 
		$w_i= \sum_{j=1}^{2N} ( T^n)_{ij}$.
		Since $ T$ is non-negative, $ T^n$ is also non-negative for any $n \in \mathbb{N}$
		and it holds that
		\begin{align*}
		\Vert  T^n \Vert
		= \max_{i=1, \ldots, 2N}\sum_{j=1}^{2N} (T^n)_{ij}
		=\max_{i=1, \ldots, 2N} \left\vert\sum_{j=1}^{2N} (T^n)_{ij}\right\vert
		= \max_{i=1, \ldots, 2N} \vert w_i\vert=\Vert  T^n \bold{1}_N\Vert.
		\end{align*}
	\end{proof}
	
	\noindent
	Next, let $a, b, c, d \in [0, 1)$ be real numbers such that $a<b\leq c<d$. Direct calculations show that the matrices $ T_1$ and ${ T_2}$ satisfy the following relations:
	\begin{align}
	b\,{\bold{1}_1}< T_1\begin{bmatrix}a\\ b\end{bmatrix}+  T_2\bold{1}_1&= \begin{bmatrix}(1-\frac{\delta}{\ell})b+\frac{\delta}{\ell}\\ \frac{\delta}{\ell}b + 1-\frac{\delta}{\ell}\end{bmatrix} < {\bold{1}_1},\label{Hassan_mini_3_eq:1.7}\\[0.5em]
	b\,{\bold{1}_1}\leq  T_1\begin{bmatrix}a\\ b\end{bmatrix}+  T_2\begin{bmatrix}c\\ d\end{bmatrix}&= \begin{bmatrix}(1-\frac{\delta}{\ell})b+\frac{\delta}{\ell}c\\ \frac{\delta}{\ell}b + (1-\frac{\delta}{\ell})c\end{bmatrix}\leq c\,{\bold{1}_1},\label{Hassan_mini_3_eq:1.9}\\[0.5em]
	T_1\bold{1}_1+  T_2\bold{1}_1= \bold{1}_1, \qquad  T_1\begin{bmatrix}a\\ b\end{bmatrix}&+  T_2\begin{bmatrix}c\\ d\end{bmatrix}= P\, \left(T_1\begin{bmatrix}d\\ c\end{bmatrix}+  T_2\begin{bmatrix}b\\ a\end{bmatrix}\right),\label{Hassan_mini_3_eq:1.11}
	\end{align}
	where the equality in \eqref{Hassan_mini_3_eq:1.9} holds if and only if $b=c$.
	
	\begin{definition}
		Let $n \in \left\{1, \ldots, \ceil*{\frac{N}{2}}\right\}$ be a natural number, we define $V^n \subset \mathbb{R}^{2N}$ as
		\begin{align*}
		V^n :=\left\{\bold{v}:=(\bold{v}_1, \ldots, \bold{v}_N) \colon \begin{cases}
		\bold{v}_j < \bold{1}_1 &\; \text{if } j \in \{ 1,\dots,n\} \cup \{ N+1-n,\dots, N\} \\
		\bold{v}_j =\bold{1}_1 &\; \text{otherwise}
		\end{cases}\right\}.
		\end{align*}
	\end{definition}
	
	We now state and prove the main result that will lead directly to Theorem \ref{Hassan_mini_3_thm:Main} ${\rm (a)}$.
	
	\begin{lemma}\label{Hassan_mini_3_lem:4}
		Let $n \in \left\{1, \ldots, \ceil*{\frac{N}{2}}\right\}$ be a natural number and let $\bold{w}= T^n \bold{1}_N$. Then it holds that $ T^n \bold{1}_N \in V^n$, and for all $j \in \{1, \ldots, N\}$ it holds that $\bold{w}_j = P \, \bold{w}_{N+1-j}$.
	\end{lemma}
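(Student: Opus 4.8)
The plan is to establish both claims simultaneously by induction on $n\in\{1,\dots,\ceil*{\frac{N}{2}}\}$, using the block-tridiagonal structure of $T$: if $\bold{w}=T\bold{v}$ for $\bold{v}=(\bold{v}_1,\dots,\bold{v}_N)$, then $\bold{w}_1=\widetilde{T_2}\bold{v}_2$, $\bold{w}_N=\widetilde{T_1}\bold{v}_{N-1}$, and $\bold{w}_j=T_1\bold{v}_{j-1}+T_2\bold{v}_{j+1}$ for $2\leq j\leq N-1$. For the base case $n=1$ one computes directly $\bold{w}_1=\widetilde{T_2}\bold{1}_1=[\,0,\,1-\frac{\delta}{\ell}\,]^{\top}<\bold{1}_1$, $\bold{w}_N=\widetilde{T_1}\bold{1}_1=[\,1-\frac{\delta}{\ell},\,0\,]^{\top}=P\bold{w}_1$, and $\bold{w}_j=T_1\bold{1}_1+T_2\bold{1}_1=\bold{1}_1$ for $2\leq j\leq N-1$ by the first identity in \eqref{Hassan_mini_3_eq:1.11}; hence $\bold{w}=T\bold{1}_N\in V^1$ and the required symmetry holds, the middle blocks satisfying $\bold{1}_1=P\bold{1}_1$.

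For the inductive step, assume $\bold{v}:=T^n\bold{1}_N\in V^n$ with $\bold{v}_j=P\bold{v}_{N+1-j}$ and suppose $n+1\leq\ceil*{\frac{N}{2}}$; put $\bold{w}:=T\bold{v}=T^{n+1}\bold{1}_N$. I would split $\{1,\dots,N\}$ into three ranges. First, the two boundary blocks: $\bold{w}_1=\widetilde{T_2}\bold{v}_2=[\,0,\,(1-\frac{\delta}{\ell})(\bold{v}_2)_1\,]^{\top}$ has both entries at most $1-\frac{\delta}{\ell}<1$ (since $0\leq(\bold{v}_2)_1\leq1$), so $\bold{w}_1<\bold{1}_1$, and symmetrically $\bold{w}_N<\bold{1}_1$. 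Second, the blocks $2\leq j\leq n+1$ and their mirror images $N-n\leq j\leq N-1$: here $j-1\in\{1,\dots,n\}$, so the inductive hypothesis gives $\bold{v}_{j-1}<\bold{1}_1$ strictly, while $\bold{v}_{j+1}\leq\bold{1}_1$; every entry of $\bold{w}_j=T_1\bold{v}_{j-1}+T_2\bold{v}_{j+1}$ is a convex combination with weights $\frac{\delta}{\ell},1-\frac{\delta}{\ell}\in(0,1)$ of one component of $\bold{v}_{j-1}$ and one of $\bold{v}_{j+1}$, hence is strictly below $1$, so $\bold{w}_j<\bold{1}_1$ (cf.\ the sharper relation \eqref{Hassan_mini_3_eq:1.7} in the case $\bold{v}_{j+1}=\bold{1}_1$). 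Third, the blocks $n+2\leq j\leq N-n-1$: then $j-1,j,j+1\in\{n+1,\dots,N-n\}$, so $\bold{v}_{j-1}=\bold{v}_{j+1}=\bold{1}_1$ and $\bold{w}_j=T_1\bold{1}_1+T_2\bold{1}_1=\bold{1}_1$. Since these three ranges cover $\{1,\dots,N\}$, this gives $\bold{w}\in V^{n+1}$.

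To propagate the symmetry, note that for $2\leq j\leq N-1$ we also have $2\leq N+1-j\leq N-1$, so $\bold{w}_{N+1-j}=T_1\bold{v}_{N-j}+T_2\bold{v}_{N-j+2}$; substituting $\bold{v}_{N-j}=P\bold{v}_{j+1}$ and $\bold{v}_{N-j+2}=P\bold{v}_{j-1}$ from the inductive hypothesis and writing $\bold{v}_{j-1}=[\,a,\,b\,]^{\top}$, $\bold{v}_{j+1}=[\,c,\,d\,]^{\top}$, the second identity in \eqref{Hassan_mini_3_eq:1.11} yields $\bold{w}_j=T_1[\,a,\,b\,]^{\top}+T_2[\,c,\,d\,]^{\top}=P\bigl(T_1[\,d,\,c\,]^{\top}+T_2[\,b,\,a\,]^{\top}\bigr)=P\bold{w}_{N+1-j}$. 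For $j\in\{1,N\}$ the same follows from the directly verified identity $P\widetilde{T_1}P=\widetilde{T_2}$ together with $\bold{v}_{N-1}=P\bold{v}_2$. This closes the induction; Theorem~\ref{Hassan_mini_3_thm:Main}~${\rm(a)}$ then drops out by taking $n=\ceil*{\frac{N}{2}}$, for which $V^n$ forces every block of $T^n\bold{1}_N$ to lie strictly below $\bold{1}_1$, whence $\|T^n\|=\|T^n\bold{1}_N\|<1$ by Lemma~\ref{Hassan_mini_3_lemma:1}, and $\rho(T)\leq\|T^n\|^{1/n}<1$.

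The crux is not any single estimate but the combinatorial bookkeeping: one must check that the three index ranges really do cover $\{1,\dots,N\}$ for \emph{every} admissible $n$, which forces a case split on the parity of $N$. When $N$ is odd and $n+1=\ceil*{\frac{N}{2}}=\frac{N+1}{2}$, the two contracted ranges $\{1,\dots,n+1\}$ and $\{N-n,\dots,N\}$ overlap in the single central block $\frac{N+1}{2}$ and the third range is empty, and one has to confirm that this is exactly the configuration described by $V^{n+1}$. A second delicate point is that the \emph{strict} inequality in the second group genuinely propagates inward: this relies on $\frac{\delta}{\ell}\in(0,1)$ (so both convex weights are positive) and on the fact that it is $\bold{v}_{j-1}$, not merely $\bold{v}_{j+1}$, that lies strictly below $\bold{1}_1$. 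One could alternatively carry a stronger invariant recording the monotone ordering of the entries within and across the outer blocks so as to invoke \eqref{Hassan_mini_3_eq:1.7}--\eqref{Hassan_mini_3_eq:1.9} verbatim, but the convexity remark above removes the need for this at the present level.
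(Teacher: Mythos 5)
Your proof is correct and takes essentially the same route as the paper's: induction on $n$ with the invariant $T^n\bold{1}_N\in V^n$ together with the mirror symmetry $\bold{w}_j=P\,\bold{w}_{N+1-j}$, verified blockwise from the identities \eqref{Hassan_mini_3_eq:1.7}--\eqref{Hassan_mini_3_eq:1.11}, with only a cosmetic difference in the case split ($j\in\{1,N\}$, the two contracted bands, the middle, versus the paper's $j<n$, $j=n$, $j=n+1$, middle). One small slip: for the mirrored band $N-n\le j\le N-1$ the strictness comes from $\bold{v}_{j+1}<\bold{1}_1$ (since $j-1$ need not lie in $\{1,\dots,n\}$ there), but your convex-combination observation applies verbatim with the roles of $\bold{v}_{j-1}$ and $\bold{v}_{j+1}$ swapped, so the argument stands.
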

	\begin{proof}
		We proceed by induction on the iteration index $n$. Let $n=1$ and $\bold{w} =  T\bold{1}_N$. Then by definition of the iteration matrix $T$ and the matrices $\widetilde{ T_1}, \widetilde{ T_2}$ it holds that
		\begin{align*}
		\bold{w}_1= \widetilde{ T_2} \bold{1}_1=\begin{bmatrix} 0\\ 1-\frac{\delta}{\ell}\end{bmatrix} < \bold{1}_1,\qquad
		\bold{w}_N= \widetilde{ T_1} \bold{1}_1= \begin{bmatrix} 1-\frac{\delta}{\ell}\\ 0\end{bmatrix} < \bold{1}_1,
		\end{align*}
		so that $\bold{w}_1= P \, \bold{w}_N$. Furthermore, by \eqref{Hassan_mini_3_eq:1.11} it holds that $\bold{w}_j=  T_1\bold{1}_1+ T_2\bold{1}_1=\bold{1}_1$ for all $j \in \{2, \ldots, N-1\}$, and thus $\bold{w}_j= P \,\bold{w}_{N+1-j}$. Hence, Lemma \ref{Hassan_mini_3_lem:4} holds for $n=1$. 
		
		%
		
		Assume now that Lemma \ref{Hassan_mini_3_lem:4} holds for some $n \in \left\{1, \ldots, \ceil*{\frac{N}{2}}-1\right\}$. We must show that Lemma \ref{Hassan_mini_3_lem:4} also holds for $n+1$. 
		Let $\bold{u} =  T^n \bold{1}_N$ and let $\bold{w}=  T^{n+1}\bold{1}_N$. 
		We proceed in three parts. First, we prove that the result holds in the case $n \geq 2$ for indices $j \in \{1, \ldots, n-1\}$, then we prove it for the index $j=n$ and finally for the index $j=n+1$. Note that it is necessary to proceed in these three steps since in each of these cases $\bold{w}_j$ depends on $\bold{u}_{j-1}$ and $\bold{u}_{j+1}$ which take different values depending on the index $j$.
		
		\begin{enumerate}
			\item $n \geq 2$ and $j \in \{1, \ldots, n-1\}$: Assume first that $j=1$. It follows from the induction hypothesis that $\bold{u}_2=P \, \bold{u}_{N-1} < \bold{1}_1$. A direct calculation similar to the one for the base case $n=1$ reveals that $\bold{w}_1= P \, \bold{w}_N < \bold{1}_1$. Now assume $j \neq 1$. It follows by the induction hypothesis that $\bold{u}_{j-1}=P \, \bold{u}_{N+2-j}$,
			and $P \,\bold{u}_{j+1}=\bold{u}_{N-j}$, and $\bold{u}_{j-1}, \bold{u}_{j+1} < \bold{1}_1$. We therefore obtain from \eqref{Hassan_mini_3_eq:1.11} that
			\begin{equation*}
			\medmuskip=-1mu
			\thinmuskip=-1mu
			\thickmuskip=-1mu
			\nulldelimiterspace=0.9pt
			\scriptspace=0.9pt    
			\arraycolsep0.9em
			\bold{w}_j= T_1 \bold{u}_{j-1}+  T_2 \bold{u}_{j+1}= P \,\left(T_1 P \, \bold{u}_{j+1}+  T_2 P \, \bold{u}_{j-1}\right)
			=P \,\left(T_1 \bold{u}_{N-j}+  T_2 \bold{u}_{N-j+2}\right)=P \, \bold{w}_{N-j+1} < \bold{1}_1.
			\end{equation*} 
			
			\item $j=n$: The induction hypothesis implies that $\bold{u}_{n-1}= P \, \bold{u}_{N+2-n}$, $\bold{u}_{n-1} < \bold{1}_1$ and $\bold{u}_{n+1}= \bold{1}_1$. 
			Hence \eqref{Hassan_mini_3_eq:1.11} implies that
			\begin{equation*}
			\medmuskip=-0.2mu
			\thinmuskip=-0.2mu
			\thickmuskip=-0.2mu
			\nulldelimiterspace=0.9pt
			\scriptspace=1.5pt    
			\arraycolsep1.5em
			\bold{w}_n=  T_1 \bold{u}_{n-1}+  T_2 \bold{1}_1= P \left(T_1 \bold{1}_1 +  T_2 P \, \bold{u}_{n-1}\right)=P \left( T_1 \bold{1}_1 +  T_2 \, \bold{u}_{N+2-n}\right)= P \, \bold{w}_{N+1-n} < \bold{1}_1.
			\end{equation*}
			
			\item Let $j=n+1$: By the induction hypothesis we have that $\bold{u}_{n}= P \, \bold{u}_{N+1-n}$, \linebreak
			$\bold{u}_{n+2}= P \, \bold{u}_{N-1-n}$, $\bold{u}_{n}< \bold{1}_1$ and $\bold{u}_{n+2}\leq \bold{1}_1$.
			Using \eqref{Hassan_mini_3_eq:1.9} and \eqref{Hassan_mini_3_eq:1.11} we get
			\begin{equation*}
			\medmuskip=-1mu
			\thinmuskip=-1mu
			\thickmuskip=-1mu
			\nulldelimiterspace=0.9pt
			\scriptspace=0.9pt    
			\arraycolsep0.9em
			\bold{w}_{n+1}=  T_1 \bold{u}_{n}+  T_2\bold{u}_{n+2}=P \left(T_1 P \, \bold{u}_{n+2} +  T_2 P \, \bold{u}_n \right)=P \left( T_1 \bold{u}_{N-1-n} +  T_2 \bold{u}_{N+1-n}\right)= P \, \bold{w}_{N-n} < \bold{1}_1.
			\end{equation*}
			It remains to show that $\bold{w}_k = \bold{1}_1$ for all $k \in \{n+2, \ldots, \ceil*{\frac{N}{2}}\} \cup \{\ceil*{\frac{N}{2}}, \ldots, N-1-n\}$. The induction hypothesis yields that $\bold{u} = T^n \bold{1}_N \in V^n$. Hence, $\bold{u}_k = \bold{1}_1$ for all $k \in \{n+1, \ldots, \ceil*{\frac{N}{2}}\} \cup \{\ceil*{\frac{N}{2}}, \ldots, N-n\}$. The result now follows by applying Equation \eqref{Hassan_mini_3_eq:1.11}.
			
		\end{enumerate}
	\end{proof}
	
	Lemma \ref{Hassan_mini_3_lem:4} implies that $T^{\ceil*{\frac{N}{2}}} \bold{1}_N \in V^{\ceil*{\frac{N}{2}}}$ so that $\Vert T^{\ceil*{\frac{N}{2}}} \Vert= \Vert T^{\ceil*{\frac{N}{2}}} \bold{1}_N\Vert < 1$, which is precisely Theorem 1 $(\rm{a})$.
	
	\section{Proof of Theorem 1 $(\rm{b})$}\label{Hassan_mini_3_subs:2}
	
	We first prove an intermediate lemma.
	
	\begin{lemma}\label{Hassan_mini_3_lem:4a}
		Let $n \in \left\{2, \ldots, \floor*{\frac{N}{2}}-2\right\}$, let $\bold{u}= T^n{\bold{1}}_N$ and $\bold{w}= T {\bold{u}}$. If for all $j \in \{1, \ldots, n\}$ it holds that
		\begin{align}\label{Hassan_mini_3_eq:hypothesis1}
		(\bold{u}_j)_1 \leq (\bold{u}_j)_2, \quad \text{and} \quad \bold{u}_j < \bold{u}_{j+1},
		\end{align}
		then for all  $j \in \{1, \ldots, n+1\}$ it holds that
		\begin{align}\label{Hassan_mini_3_eq:star2}
		(\bold{w}_j)_1 \leq (\bold{w}_j)_2, \quad \text{and} \quad \bold{w}_j < \bold{w}_{j+1}.
		\end{align}
	\end{lemma}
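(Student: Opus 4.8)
The plan is to compute $\bold{w}=T\bold{u}$ block by block and feed in the structural information already provided by Lemma~\ref{Hassan_mini_3_lem:4}. Since $\bold{w}=T^{n+1}\bold{1}_N$ with $n+1\le\ceil*{\frac{N}{2}}$, Lemma~\ref{Hassan_mini_3_lem:4} gives $\bold{u}=T^n\bold{1}_N\in V^n$ and $\bold{w}\in V^{n+1}$; moreover $n\le\floor*{\frac{N}{2}}-2$ implies $2n+2\le N$, so $\bold{u}_{n+1}=\bold{u}_{n+2}=\bold{1}_1$, and $\bold{w}_{n+2}=\bold{1}_1$ because $\bold{w}\in V^{n+1}$. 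From the block form of $T$ we have $\bold{w}_j=T_1\bold{u}_{j-1}+T_2\bold{u}_{j+1}$ for $2\le j\le N-1$, while $\bold{w}_1=\widetilde{T_2}\bold{u}_2$ has first component $0$ and second component $(1-\tfrac{\delta}{\ell})(\bold{u}_2)_1$; finally $\bold{u}\ge\bold{0}$ since $T$ is non-negative.

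I would first dispatch the monotonicity $\bold{w}_j<\bold{w}_{j+1}$, which is the soft half. For $j=n+1$ it is immediate: $\bold{w}_{n+1}<\bold{1}_1=\bold{w}_{n+2}$ because $\bold{w}\in V^{n+1}$. For $2\le j\le n$, $\bold{w}_{j+1}-\bold{w}_j=T_1(\bold{u}_j-\bold{u}_{j-1})+T_2(\bold{u}_{j+2}-\bold{u}_{j+1})$: the first term is the vector $(1-\tfrac{\delta}{\ell},\tfrac{\delta}{\ell})^{\top}$ scaled by $(\bold{u}_j)_2-(\bold{u}_{j-1})_2>0$ (from $\bold{u}_{j-1}<\bold{u}_j$ in \eqref{Hassan_mini_3_eq:hypothesis1}), hence componentwise positive, and the second is non-negative by \eqref{Hassan_mini_3_eq:hypothesis1} together with Lemma~\ref{Hassan_mini_3_lem:4}. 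The case $j=1$ is checked directly from the explicit forms of $\bold{w}_1$ and $\bold{w}_2$, using $\bold{u}_2<\bold{u}_3$ and $\bold{0}\le\bold{u}_1<\bold{u}_2<\bold{u}_3$ (so $(\bold{u}_3)_1>0$). In all cases non-negativity of $T_1,T_2$ combined with the strict inequality in the hypothesis does the job.

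The real content is the sign relation $(\bold{w}_j)_1\le(\bold{w}_j)_2$. A direct computation from $\bold{w}_j=T_1\bold{u}_{j-1}+T_2\bold{u}_{j+1}$ yields, for $2\le j\le N-1$,
\[
(\bold{w}_j)_2-(\bold{w}_j)_1=\Bigl(1-\tfrac{2\delta}{\ell}\Bigr)\bigl[(\bold{u}_{j+1})_1-(\bold{u}_{j-1})_2\bigr];
\]
the case $j=1$ is trivial since $(\bold{w}_1)_1=0$, and for $j=n+1$, using $\bold{u}_{n+2}=\bold{1}_1$, one gets $(\bold{w}_{n+1})_2-(\bold{w}_{n+1})_1=(1-\tfrac{2\delta}{\ell})(1-(\bold{u}_n)_2)$ with $1-(\bold{u}_n)_2>0$ (as $\bold{u}\in V^n$). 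So the whole claim reduces to two facts: (i) $\delta\le\ell/2$, and (ii) the chained ordering $(\bold{u}_{j-1})_2\le(\bold{u}_{j+1})_1$ for $2\le j\le n+1$. For (i): since $n\ge 2$ write $\bold{u}=T\bold{u}'$ with $\bold{u}'=T^{n-1}\bold{1}_N$; Lemma~\ref{Hassan_mini_3_lem:4} at level $n-1$ gives $\bold{u}'_{n+1}=\bold{1}_1$ and $\bold{u}'_{n-1}<\bold{1}_1$, whence $\bold{u}_n=T_1\bold{u}'_{n-1}+T_2\bold{1}_1$ and $(\bold{u}_n)_2-(\bold{u}_n)_1=(1-\tfrac{2\delta}{\ell})(1-(\bold{u}'_{n-1})_2)$; this is $\ge 0$ by \eqref{Hassan_mini_3_eq:hypothesis1} and $1-(\bold{u}'_{n-1})_2>0$, so $\delta\le\ell/2$.

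The main obstacle is (ii): the chained ordering is not a consequence of \eqref{Hassan_mini_3_eq:hypothesis1} alone, so it has to be supplied as a companion property. I expect to obtain it by propagating it through the iteration with the same blockwise computation, invoking \eqref{Hassan_mini_3_eq:1.9} — whose standing assumption $a<b\le c<d$ encodes precisely the chained ordering $(\bold{u}_{j-1})_1<(\bold{u}_{j-1})_2\le(\bold{u}_{j+1})_1<(\bold{u}_{j+1})_2$ — so that, once (i) is available, \eqref{Hassan_mini_3_eq:1.9} simultaneously gives $(\bold{w}_j)_1\le(\bold{w}_j)_2$ and reproduces the chained ordering for $\bold{w}$; alternatively (ii) can be read off the explicit boundary-layer entries of $T^n\bold{1}_N$ using the symmetry $\bold{u}_j=P\,\bold{u}_{N+1-j}$ from Lemma~\ref{Hassan_mini_3_lem:4}. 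Throughout, the one genuinely fiddly point is the index bookkeeping: one must keep careful track of which blocks $\bold{u}_k$ (and $\bold{u}'_k$) are strictly below $\bold{1}_1$ and which are exactly $\bold{1}_1$, so that \eqref{Hassan_mini_3_eq:1.9} and \eqref{Hassan_mini_3_eq:1.11} are applied only where their hypotheses hold.
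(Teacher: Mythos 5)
Your block-by-block computation follows essentially the same route as the paper's proof (which is an induction over the subdomain index $j$ using the explicit form of \eqref{Hassan_mini_3_eq:1.9}), and your treatment of the monotonicity half, $\bold{w}_j<\bold{w}_{j+1}$, is complete and correct. Your reduction of the sign relation to the identity $(\bold{w}_j)_2-(\bold{w}_j)_1=(1-\tfrac{2\delta}{\ell})\bigl[(\bold{u}_{j+1})_1-(\bold{u}_{j-1})_2\bigr]$ is also correct, and it is a sharper diagnosis than the paper's own wording: the paper invokes \eqref{Hassan_mini_3_eq:1.9} after verifying only the componentwise inequality $\bold{u}_j\leq\bold{u}_{j+2}$, whereas the standing assumption $a<b\leq c<d$ of \eqref{Hassan_mini_3_eq:1.9} is exactly the interleaved ordering $(\bold{u}_j)_2\leq(\bold{u}_{j+2})_1$ that your formula shows is actually needed. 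Your derivation of $\delta\leq\ell/2$ from the hypothesis is unusual but logically sound (this inequality is in any case implicit in the geometry, since $b_{j-1}<a_{j+1}$ is what makes the ordering of the entries of $\bold{e}^n$ meaningful).

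The genuine gap is your item (ii). You correctly observe that the chained ordering $(\bold{u}_{j-1})_2\leq(\bold{u}_{j+1})_1$ for $2\leq j\leq n$ does not follow from \eqref{Hassan_mini_3_eq:hypothesis1}, but you then only announce that you ``expect to obtain it by propagating it through the iteration'' without carrying out the propagation, and this is precisely the load-bearing step of the whole lemma. The propagation does in fact close: writing $\bold{w}=T\bold{u}$ one finds $(\bold{w}_{j+2})_1-(\bold{w}_j)_2=(1-\tfrac{\delta}{\ell})\bigl[(\bold{u}_{j+1})_2-(\bold{u}_{j+1})_1\bigr]+\tfrac{\delta}{\ell}\bigl[(\bold{u}_{j+3})_1-(\bold{u}_{j-1})_2\bigr]$, and both brackets are non-negative \emph{provided} the invariant being propagated already contains both the sign relation and the chained ordering (so that $(\bold{u}_{j-1})_2\leq(\bold{u}_{j+1})_1\leq(\bold{u}_{j+1})_2\leq(\bold{u}_{j+3})_1$). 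So to complete your argument you must either strengthen the hypothesis \eqref{Hassan_mini_3_eq:hypothesis1} to include the chained ordering as a companion invariant (and then check the base case $j\in\{1,2\}$ and the boundary indices near $j=n$ where blocks equal $\bold{1}_1$), or establish the chained ordering for $T^n\bold{1}_N$ by a separate induction on $n$; as written, the lemma is not proved. This matters downstream as well: Lemma \ref{Hassan_mini_3_lem:5} feeds only \eqref{Hassan_mini_3_eq:star2} back into Lemma \ref{Hassan_mini_3_lem:4a}, so the missing invariant cannot be recovered there either.
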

	\begin{proof}
		We prove the result by induction over the subdomain index $j$. The definition of the matrices $\widetilde{ T_1}$ and $\widetilde{ T_2}$ implies that $0=(\bold{w}_1)_1 \leq (\bold{w}_1)_2$, and Equation \eqref{Hassan_mini_3_eq:1.9} yields that 
		$$\bold{w}_1 = \begin{bmatrix} 0 \\ (1-\frac{\delta}{L})(\bold{u}_2)_1\end{bmatrix} < \begin{bmatrix} (1-\frac{\delta}{L})(\bold{u}_1)_2  \\ (1-\frac{\delta}{L})(\bold{u}_3)_1\end{bmatrix} \leq \begin{bmatrix} (1-\frac{\delta}{L})(\bold{u}_1)_2  +\frac{\delta}{L}(\bold{u}_3)_1\\ \frac{\delta}{L}(\bold{u}_1)_2+(1-\frac{\delta}{L})(\bold{u}_3)_1\end{bmatrix}=\bold{w}_2.$$ 
		
		We now proceed to the induction step. Assume that \eqref{Hassan_mini_3_eq:star2} holds for some $j \in \{1, \ldots, n\}$. We first show that $(\bold{w}_{j+1})_1 \leq (\bold{w}_{j+1})_2$. Equation \eqref{Hassan_mini_3_eq:1.9} implies that it is sufficient to show that $\bold{u}_j \leq \bold{u}_{j+2}$. There are two cases: $j<n-1$ and $j \in \{n-1, n\}$.
		If $j < n-1$, then \eqref{Hassan_mini_3_eq:hypothesis1} yields the required result.
		If $j \in \{n-1, n\}$, then \eqref{Hassan_mini_3_eq:hypothesis1} and the fact that $\bold{u} \in V^n$ gives that $\bold{u}_j < \bold{u}_{j+2}=\bold{1}_1$.
		
		Next, we show that $\bold{w}_{j+1} < \bold{w}_{j+2}$. Equation \eqref{Hassan_mini_3_eq:1.9} implies that it is sufficient to show that $\bold{u}_j < \bold{u}_{j+1}$ and $\bold{u}_{j+2} \leq \bold{u}_{j+3}$.
		There are three cases: $j < n-1$, $j=n-1$ and $j=n$.
		If $j < n-1$ then \eqref{Hassan_mini_3_eq:hypothesis1} yields the required result.
		If $j=n-1$ then \eqref{Hassan_mini_3_eq:hypothesis1} yields that $\bold{u}_j  < \bold{u}_{j+1}$ and the fact that $\bold{u} \in V^n$ gives that $\bold{u}_{j+2} =\bold{u}_{j+3}=\bold{1}_1$.
		If $j=n$ then \eqref{Hassan_mini_3_eq:hypothesis1} yields that $\bold{u}_j  < \bold{u}_{j+1}$ and it remains to show that $\bold{u}_{j+2} \leq\bold{u}_{j+3}$. To this end, we recall that $n \leq \floor*{\frac{N}{2}}-2$. Therefore, there are three sub-cases: 
		$n < \floor*{\frac{N}{2}}-2$ in which case $n+1<j+2, j+3 \leq \floor*{\frac{N}{2}}$; 
		$n= \floor*{\frac{N}{2}}-2$ and $N$ is even in which case $\bold{u}_{j+2} = P\bold{u}_{j+3}$;
		$n= \floor*{\frac{N}{2}}-2$ and $N$ is odd in which case $\bold{u}_{j+3} = P\bold{u}_{j+1}$.
		In all three sub-cases, we obtain that $\bold{u}_{j+2} =\bold{u}_{j+3}=\bold{1}_1$.
	\end{proof}

	Lemma \ref{Hassan_mini_3_lem:5} below describes the `shape' of the vector $T^n \bold{1}_N$ for natural numbers $n < \floor*{\frac{N}{2}}$. 
	
	\begin{lemma}\label{Hassan_mini_3_lem:5}
		Let $n \in \left\{1, \ldots, \floor*{\frac{N}{2}}-1\right\}$ be a natural number and let $\bold{w}= T^n {\bold{1}}_N$. Then for all $j \in \{1, \ldots, n\}$ it holds that
		\begin{align*}
		(\bold{w}_j)_1 \leq (\bold{w}_j)_2, &\quad (\bold{w}_{N+1-j})_2 \leq (\bold{w}_{N+1-j})_1,\; \text{ and} \quad
		\bold{w}_j < \bold{w}_{j+1}, \quad\bold{w}_{N+1-j} < \bold{w}_{N-j}.
		\end{align*}
	\end{lemma}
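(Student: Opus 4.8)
The plan is to prove Lemma \ref{Hassan_mini_3_lem:5} by induction on the iteration index $n$, using Lemma \ref{Hassan_mini_3_lem:4a} as the main engine for the interior-propagation part of the induction step, and exploiting the symmetry relation $\bold{w}_j = P\,\bold{w}_{N+1-j}$ from Lemma \ref{Hassan_mini_3_lem:4} to deduce the statements about the indices $N+1-j$ from those about the indices $j$. Indeed, observe that the four claimed inequalities come in two symmetric pairs: applying $P$ (which swaps the two entries of a block) to $(\bold{w}_j)_1 \leq (\bold{w}_j)_2$ gives $(\bold{w}_{N+1-j})_2 \leq (\bold{w}_{N+1-j})_1$, and $\bold{w}_j < \bold{w}_{j+1}$ together with the same symmetry gives $\bold{w}_{N+1-j} < \bold{w}_{N-j}$. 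So it suffices to establish, for all $j \in \{1,\dots,n\}$, that $(\bold{w}_j)_1 \leq (\bold{w}_j)_2$ and $\bold{w}_j < \bold{w}_{j+1}$, i.e.\ exactly the ``left half'' statement \eqref{Hassan_mini_3_eq:hypothesis1}.

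For the base case $n=1$: $\bold{w} = T\bold{1}_N$ was computed in the proof of Lemma \ref{Hassan_mini_3_lem:4}, namely $\bold{w}_1 = [\,0,\;1-\tfrac{\delta}{\ell}\,]^\top$ and $\bold{w}_j = \bold{1}_1$ for $j \in \{2,\dots,N-1\}$. Thus $(\bold{w}_1)_1 = 0 \leq 1-\tfrac{\delta}{\ell} = (\bold{w}_1)_2$ and $\bold{w}_1 < \bold{1}_1 = \bold{w}_2$, so the left-half statement holds for $j=1$ (and one needs $\floor*{\frac{N}{2}}-1 \geq 1$, i.e.\ $N \geq 4$; the cases $N \in \{1,2,3\}$ make the index set empty or are trivial and can be dispatched separately or absorbed into the statement's vacuity). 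For the induction step from $n$ to $n+1$ (valid for $n+1 \leq \floor*{\frac{N}{2}}-1$, equivalently $n \leq \floor*{\frac{N}{2}}-2$), set $\bold{u} = T^n\bold{1}_N$ and $\bold{w} = T\bold{u} = T^{n+1}\bold{1}_N$. The induction hypothesis gives precisely the hypothesis \eqref{Hassan_mini_3_eq:hypothesis1} of Lemma \ref{Hassan_mini_3_lem:4a} (for indices $j \in \{1,\dots,n\}$); also $\bold{u} = T^n\bold{1}_N \in V^n$ by Lemma \ref{Hassan_mini_3_lem:4}. Hence Lemma \ref{Hassan_mini_3_lem:4a} applies and yields \eqref{Hassan_mini_3_eq:star2}: $(\bold{w}_j)_1 \leq (\bold{w}_j)_2$ and $\bold{w}_j < \bold{w}_{j+1}$ for all $j \in \{1,\dots,n+1\}$. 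This is exactly the left-half statement at level $n+1$, completing the induction, and then the symmetry argument above gives the right-half statement.

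The main obstacle is bookkeeping of index ranges rather than any deep difficulty: one must check that the range $n \in \{2,\dots,\floor*{\frac{N}{2}}-2\}$ required by Lemma \ref{Hassan_mini_3_lem:4a} is compatible with the range $n \in \{1,\dots,\floor*{\frac{N}{2}}-1\}$ of Lemma \ref{Hassan_mini_3_lem:5}, and in particular handle the transition $n=1 \to n=2$ separately (since Lemma \ref{Hassan_mini_3_lem:4a} is stated only for $n \geq 2$). For $n=1 \to n=2$ one repeats the short computation of Lemma \ref{Hassan_mini_3_lem:4a}'s proof by hand — it only involves $\bold{u}_1, \bold{u}_2, \bold{u}_3$ and the fact that $\bold{u}_2 = \bold{u}_3 = \bold{1}_1$ when $N \geq 6$, using \eqref{Hassan_mini_3_eq:1.9} — or, more cleanly, one notes that the proof of Lemma \ref{Hassan_mini_3_lem:4a} actually goes through verbatim for $n=1$ as well once one observes that its only use of $n \geq 2$ is to have a nonempty set of indices to induct over. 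A second minor point: one should verify the edge behaviour near the middle index $\ceil*{\frac{N}{2}}$, where for $N$ odd the two ``halves'' $\{1,\dots,n\}$ and $\{N+1-j : j \le n\}$ overlap at the central block only when $n$ reaches $\floor*{\frac{N}{2}}$, which the hypothesis $n \leq \floor*{\frac{N}{2}}-1$ excludes — so no inconsistency arises. Once these range checks are in place the proof is complete.
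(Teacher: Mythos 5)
Your proposal is correct and follows essentially the same route as the paper: reduce to the ``left half'' via the symmetry $\bold{w}_j = P\,\bold{w}_{N+1-j}$ from Lemma \ref{Hassan_mini_3_lem:4}, verify $n=1$ and $n=2$ directly, and then drive the induction with Lemma \ref{Hassan_mini_3_lem:4a}. One small caveat: your alternative suggestion that Lemma \ref{Hassan_mini_3_lem:4a}'s proof goes through \emph{verbatim} for $n=1$ is slightly too optimistic, since the strict inequality $(1-\frac{\delta}{\ell})(\bold{u}_2)_1 < (1-\frac{\delta}{\ell})(\bold{u}_3)_1$ in its first display uses $\bold{u}_2 < \bold{u}_3$, which fails when $n=1$ (there $\bold{u}_2=\bold{u}_3=\bold{1}_1$); the conclusion still holds because strictness is recovered in the final step, but this is exactly why the paper treats $n=2$ as a separate explicit base case, as your primary suggestion also does.
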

	\begin{proof}
		
		By Lemma \ref{Hassan_mini_3_lem:4} it holds that $\bold{w}_j = P\, \bold{w}_{N+1-j}$ for each $j \in \{1, \ldots, n\}$ so it suffices to show that for each $n \in \left\{1, \ldots, \floor*{\frac{N}{2}}-1\right\}$ and all $j \in \{1, \ldots, n\}$ it holds that 
		\begin{align}\label{Hassan_mini_3_eq:star3}
		(\bold{w}_j)_1 \leq (\bold{w}_j)_2 \text{ and }\bold{w}_j < \bold{w}_{j+1}.
		\end{align}
		We prove the result by induction over the iteration number $n$.
		Let $n=1$. The definition of the matrix $\widetilde{ T_2}$ and \eqref{Hassan_mini_3_eq:1.11} yield that $\bold{w}_1 = \begin{bmatrix} 0\\ 1-\frac{\delta}{L}\end{bmatrix}$ and $\bold{w}_2= \bold{1}_1$. 
		Thus, \eqref{Hassan_mini_3_eq:star3} holds for $n=1$. Next, let $n=2$ and let $\bold{u}= T^ {\bold{1}}_N$. The definition of the matrix $\widetilde{T_2}$ together with Equations \eqref{Hassan_mini_3_eq:1.7} and \eqref{Hassan_mini_3_eq:1.11} yields $\bold{w}_1 = \begin{bmatrix} 0\\ 1-\frac{\delta}{L}\end{bmatrix}$, $\bold{w}_1 <\bold{w}_2 < \bold{1}_1$ and $\bold{w}_3 =\bold{1}_1$. Thus, \eqref{Hassan_mini_3_eq:star3} holds for $n=2$. 
		Finally, assume that \eqref{Hassan_mini_3_eq:star3} holds for some $n \in \{2, \ldots, \floor*{\frac{N}{2}}-2\}$. It follows from Lemma \ref{Hassan_mini_3_lem:4a} that \eqref{Hassan_mini_3_eq:star3} also holds for $n+1$.
	\end{proof}
	
	Next, Lemma \ref{Hassan_mini_3_cor:2} describes the `shape' of the vector $T^n \bold{1}_N$ for natural numbers $n \geq \floor*{\frac{N}{2}}$. Together, Lemmas \ref{Hassan_mini_3_lem:5} and \ref{Hassan_mini_3_cor:2} establish that the vector $T^n \bold{1}_N$ is monotonically increasing as one moves from the extrema of $\Omega$ towards its centre.
	
	\begin{lemma}\label{Hassan_mini_3_cor:2}
		Let $n\geq \floor*{\frac{N}{2}}$ be a natural number, and let $\bold{w}=  T^{n}{\bold{1}}_N$. Then for all $j \in \{1, \ldots, \floor*{\frac{N}{2}}-1\}$ it holds that
		\begin{align*}
		(\bold{w}_j)_1 \leq (\bold{w}_j)_2, \quad (\bold{w}_{N+1-j})_2 \leq (\bold{w}_{N+1-j})_1, \quad \text{and} \quad \bold{w}_j < \bold{w}_{j+1}, \quad\bold{w}_{N+1-j} < \bold{w}_{N-j}.
		\end{align*}
	\end{lemma}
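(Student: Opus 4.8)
The plan is to reduce Lemma \ref{Hassan_mini_3_cor:2} to Lemma \ref{Hassan_mini_3_lem:5} by an induction on the iteration index $n$, using the fact that for $n \geq \floor*{\frac{N}{2}}$ the vector $\bold{w}=T^n\bold{1}_N$ already satisfies the monotonicity relations on \emph{all} interior subdomains, so that the step $\bold{w} \mapsto T\bold{w}$ preserves the shape. First I would establish the base case $n = \floor*{\frac{N}{2}}$: by Lemma \ref{Hassan_mini_3_lem:5} applied with $n = \floor*{\frac{N}{2}}-1$, the vector $\bold{u} = T^{\floor*{N/2}-1}\bold{1}_N$ satisfies $(\bold{u}_j)_1 \leq (\bold{u}_j)_2$ and $\bold{u}_j < \bold{u}_{j+1}$ for all $j \in \{1,\dots,\floor*{N/2}-1\}$, together with the symmetry $\bold{u}_j = P\,\bold{u}_{N+1-j}$ from Lemma \ref{Hassan_mini_3_lem:4}. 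Then $\bold{w} = T\bold{u}$, and I would verify \eqref{Hassan_mini_3_eq:star3}-type estimates for $\bold{w}$ directly from \eqref{Hassan_mini_3_eq:1.9} and the definition of $\widetilde{T_2}$, treating carefully the small number of indices near $\floor*{N/2}$ where $\bold{u}$ transitions to being constant (here one splits into the cases $N$ even and $N$ odd, exactly as in the proof of Lemma \ref{Hassan_mini_3_lem:4a}). The symmetric statements about $\bold{w}_{N+1-j}$ follow from $\bold{w}_j = P\,\bold{w}_{N+1-j}$, which holds for $n \geq \ceil*{N/2} \geq \floor*{N/2}$ — and here I should note that Lemma \ref{Hassan_mini_3_lem:4} as stated only covers $n \leq \ceil*{N/2}$, so I would need a short separate argument (or an extension of that lemma) showing the symmetry $T^n\bold{1}_N = \text{(reflected)}$ persists for all $n$, which follows from the fact that $T$ commutes with the reversal-and-$P$ conjugation, i.e. $JTJ = T$ where $J$ is the anti-block-diagonal matrix with blocks $P$.

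For the induction step, I would assume the conclusion of Lemma \ref{Hassan_mini_3_cor:2} holds for some $n \geq \floor*{N/2}$ with $\bold{u} = T^n\bold{1}_N$, and prove it for $\bold{w} = T^{n+1}\bold{1}_N = T\bold{u}$. The argument is now cleaner than the base case because by hypothesis $\bold{u}_j < \bold{u}_{j+1}$ and $(\bold{u}_j)_1 \leq (\bold{u}_j)_2$ hold for \emph{all} $j \in \{1,\dots,\floor*{N/2}-1\}$, and by symmetry the mirrored inequalities hold on the right half, so at every interior subdomain $j$ the local update $\bold{w}_j = T_1\bold{u}_{j-1} + T_2\bold{u}_{j+1}$ (or the boundary variant for $j=1$) can be controlled using \eqref{Hassan_mini_3_eq:1.9}: one needs $\bold{u}_{j-1} \leq \bold{u}_{j+1}$ to get $(\bold{w}_j)_1 \leq (\bold{w}_j)_2$, and $\bold{u}_{j-1} < \bold{u}_j$ together with $\bold{u}_{j+1} \leq \bold{u}_{j+2}$ to get $\bold{w}_j < \bold{w}_{j+1}$. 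The only subtlety is again at the middle indices $j$ near $\floor*{N/2}$, where $\bold{u}_{j+1}$ or $\bold{u}_{j+2}$ may lie on the mirror axis; there one uses $\bold{u}_{j+1} = P\,\bold{u}_{N-j}$ and the right-half inequalities to close the estimate, and for $N$ odd one additionally uses that the central pair satisfies $\bold{u}_{\lceil N/2\rceil} = P\,\bold{u}_{\lceil N/2\rceil}$ is \emph{not} generally true, so instead one observes that $(\bold{u}_{(N+1)/2})_1$ and $(\bold{u}_{(N+1)/2})_2$ are related to the two halves and the needed comparison follows from the left- and right-half monotonicity applied to $\bold{u}_{(N-1)/2}$ and $\bold{u}_{(N+3)/2}$.

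I expect the main obstacle to be the bookkeeping at the centre of the domain, i.e. handling the indices $j \in \{\floor*{N/2}-2,\dots,\floor*{N/2}+2\}$ correctly in both parities of $N$, since the ``constant region'' of Lemma \ref{Hassan_mini_3_lem:5} has shrunk to nothing by the time $n = \floor*{N/2}$ and one can no longer fall back on $\bold{u}_k = \bold{1}_1$. Concretely, for $N$ even one must show the two central blocks $\bold{u}_{N/2}, \bold{u}_{N/2+1} = P\,\bold{u}_{N/2}$ satisfy $\bold{u}_{N/2-1} < \bold{u}_{N/2}$ and that the update does not break strict monotonicity across the axis; for $N$ odd the single central block $\bold{u}_{(N+1)/2}$ must be shown to dominate $\bold{u}_{(N-1)/2}$ strictly, which requires tracking that the off-axis blocks are already strictly below $\bold{1}_1$. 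A clean way to package all of this is to first prove, as an auxiliary lemma analogous to Lemma \ref{Hassan_mini_3_lem:4a} but valid for all $n \geq \floor*{N/2}$ (no upper restriction and no $V^n$ membership), that the property ``$(\bold{u}_j)_1 \leq (\bold{u}_j)_2$ and $\bold{u}_j < \bold{u}_{j+1}$ for all $j \leq \floor*{N/2}-1$, plus the $J$-symmetry'' is invariant under $\bold{u} \mapsto T\bold{u}$; then Lemma \ref{Hassan_mini_3_cor:2} follows immediately by combining this invariance with the base case $n = \floor*{N/2}$ established from Lemma \ref{Hassan_mini_3_lem:5}. The routine matrix computations with $T_1, T_2, \widetilde{T_2}$ via \eqref{Hassan_mini_3_eq:1.7}--\eqref{Hassan_mini_3_eq:1.11} I would not spell out in full.
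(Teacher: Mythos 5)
Your plan is essentially the paper's own: the authors omit the proof of Lemma \ref{Hassan_mini_3_cor:2} entirely, stating only that it ``can be proven in a similar manner to Lemma \ref{Hassan_mini_3_lem:5} using a proof-by-induction on the iteration number $n$'', which is precisely the induction you describe (base case supplied by Lemma \ref{Hassan_mini_3_lem:5}, invariance step modelled on Lemma \ref{Hassan_mini_3_lem:4a}). Your two refinements are both genuinely needed and silently elided in the paper: the reflection symmetry $\bold{w}_j = P\,\bold{w}_{N+1-j}$ must indeed be extended past the range $n \leq \ceil*{\frac{N}{2}}$ covered by Lemma \ref{Hassan_mini_3_lem:4} (your $JTJ=T$ argument does this, and the proof of Theorem \ref{Hassan_mini_3_thm:Main} {\rm(b)} also relies on it), and you should make sure the propagated invariant additionally records $(\bold{u}_{\floor*{\frac{N}{2}}})_1 \leq (\bold{u}_{\floor*{\frac{N}{2}}})_2$, since for $N$ even that is exactly what closes the central estimate $\bold{w}_{\floor*{\frac{N}{2}}-1} < \bold{w}_{\floor*{\frac{N}{2}}}$ and it is not implied by the inequalities at $j \leq \floor*{\frac{N}{2}}-1$ together with the symmetry.
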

	In addition, if $N$ is an odd number, then $\bold{w}_{\floor*{\frac{N}{2}}} \leq \bold{w}_{\ceil*{\frac{N}{2}}}$ and $\bold{w}_{\floor*{\frac{N}{2}}+2} < \bold{w}_{\ceil*{\frac{N}{2}}}$.
	\begin{proof}
		Lemma \ref{Hassan_mini_3_cor:2} can be proven in a similar manner to Lemma \ref{Hassan_mini_3_lem:5} using a proof-by-induction on the iteration number $n$. We omit it here for brevity.
	\end{proof}
	
	We are now ready to prove our second main result.
	
	\begin{proof}[Theorem \ref{Hassan_mini_3_thm:Main} ${\rm(\rm b)}$]
		Assume that $N \in \mathbb{N}$ is even. In view of Lemma \ref{Hassan_mini_3_lemma:1}, we must prove that $\Vert  T^{n+1}\bold{1}_N\Vert < \Vert  T^n {\bold{1}}_N\Vert$. Let $\bold{w}= T^{n+1}{\bold{1}}_N$ and $\bold{u}= T^n {\bold{1}}_N$. By Lemma \ref{Hassan_mini_3_cor:2}, we know that $\Vert  T^{n+1}{\bold{1}}_N\Vert = \Vert \bold{w}_{\ceil*{\frac{N}{2}}}\Vert$ and $\Vert  T^{n}{\bold{1}}_N\Vert = \Vert \bold{u}_{\ceil*{\frac{N}{2}}}\Vert$. 
		
		Since $\bold{w}=T \bold{u}$, we have that $\bold{w}_{\ceil*{\frac{N}{2}}} =  T_1 \bold{u}_{\ceil*{\frac{N}{2}}-1} +  T_2\bold{u}_{\ceil*{\frac{N}{2}}+1}$. Since $N$ is even, we obtain that $\ceil*{\frac{N}{2}}+1= \frac{N}{2}+1$ and thus, Lemma \ref{Hassan_mini_3_lem:4} yields that $\bold{u}_{\ceil*{\frac{N}{2}}+1}=P\bold{u}_{\ceil*{\frac{N}{2}}}$. It follows that $\bold{w}_{\ceil*{\frac{N}{2}}} =  T_1 \bold{u}_{\ceil*{\frac{N}{2}}-1} +  T_2P\bold{u}_{\ceil*{\frac{N}{2}}}$. From \eqref{Hassan_mini_3_eq:1.9} we also obtain that 
		\begin{align}\label{Hassan_mini_3_eq:newstar}
		\bold{u}_{\ceil*{\frac{N}{2}}-1}\leq \bold{w}_{\ceil*{\frac{N}{2}}} \leq P \, \bold{u}_{\ceil*{\frac{N}{2}}},
		\end{align}
		where the equality holds if and only if $(\bold{u}_{\ceil*{\frac{N}{2}}-1})_2= (P \, \bold{u}_{\ceil*{\frac{N}{2}}})_1=(\bold{u}_{\ceil*{\frac{N}{2}}})_2$. We know from Lemma \ref{Hassan_mini_3_cor:2} that $\bold{u}_{\ceil*{\frac{N}{2}}-1}< \bold{u}_{\ceil*{\frac{N}{2}}}$, which yields that $(\bold{u}_{\ceil*{\frac{N}{2}}-1})_2<(\bold{u}_{\ceil*{\frac{N}{2}}})_2$. Hence the inequalities in \eqref{Hassan_mini_3_eq:newstar} are strict:
		$\bold{u}_{\ceil*{\frac{N}{2}}-1}< \bold{w}_{\ceil*{\frac{N}{2}}} < P \, \bold{u}_{\ceil*{\frac{N}{2}}}$.
		Hence, we obtain that $\Vert  T^{n+1}{\bold{1}}_N\Vert=\Vert \bold{w}_{\ceil*{\frac{N}{2}}}\Vert < \Vert P\,\bold{u}_{\ceil*{\frac{N}{2}}}\Vert=\Vert \bold{u}_{\ceil*{\frac{N}{2}}}\Vert=\Vert  T^{n}{\bold{1}}_N\Vert$. This completes the proof of the first assertion.
		
		Assume now that $N \in \mathbb{N}$ is odd. Let $\bold{u}= T^n {\bold{1}}_N$, $\bold{w}= T^{n+1}{\bold{1}}_N$ and $\bold{y}=  T^{n+2}{\bold{1}}_N$. Lemma \ref{Hassan_mini_3_cor:2} implies that
		\begin{align*}
		\Vert  T^{n+2}{\bold{1}}_N\Vert &= \Vert \bold{y}_{\ceil*{\frac{N}{2}}}\Vert, \qquad \Vert  T^{n+1}{\bold{1}}_N\Vert = \Vert \bold{w}_{\ceil*{\frac{N}{2}}}\Vert, \qquad \Vert  T^{n}{\bold{1}}_N\Vert = \Vert \bold{u}_{\ceil*{\frac{N}{2}}}\Vert.
		\end{align*}
		Since $\Vert  T \Vert=1$, we have
		\begin{align}\label{Hassan_mini_3_eq:final1}
		\Vert \bold{y}_{\ceil*{\frac{N}{2}}}\Vert \leq \Vert \bold{w}_{\ceil*{\frac{N}{2}}}\Vert \leq \Vert \bold{u}_{\ceil*{\frac{N}{2}}}\Vert.
		\end{align}
		Clearly if $\Vert \bold{w}_{\ceil*{\frac{N}{2}}}\Vert < \Vert \bold{u}_{\ceil*{\frac{N}{2}}}\Vert$ then \eqref{Hassan_mini_3_eq:final1} yields that
		\begin{align*}
		\Vert  T^{n+2}{\bold{1}}_N\Vert \leq \Vert  T^{n+1}{\bold{1}}_N\Vert=\Vert \bold{w}_{\ceil*{\frac{N}{2}}}\Vert < \Vert \bold{u}_{\ceil*{\frac{N}{2}}}\Vert= \Vert  T^{n}{\bold{1}}_N\Vert, 
		\end{align*}
		which is our claim. Suppose that $\Vert \bold{w}_{\ceil*{\frac{N}{2}}}\Vert = \Vert \bold{u}_{\ceil*{\frac{N}{2}}}\Vert$. We show that $\Vert \bold{y}_{\ceil*{\frac{N}{2}}}\Vert < \Vert \bold{w}_{\ceil*{\frac{N}{2}}}\Vert$ which implies our claim. To do so, since $N$ is odd, we use Lemma \ref{Hassan_mini_3_lem:4} together with the facts that $\bold{y}=T\bold{w}$ and $\bold{w}=T\bold{u}$ to obtain that
		$\bold{y}_{\ceil*{\frac{N}{2}}}=  T_1 \bold{w}_{\ceil*{\frac{N}{2}}-1}+ T_2 P\,\bold{w}_{\ceil*{\frac{N}{2}}-1}$
		and $\bold{w}_{\ceil*{\frac{N}{2}}}=  T_1 \bold{u}_{\ceil*{\frac{N}{2}}-1}+ T_2 P\,\bold{u}_{\ceil*{\frac{N}{2}}-1}$
		which implies
		\begin{align} \label{Hassan_mini_3_eq:square}
		\bold{y}_{\ceil*{\frac{N}{2}}} &= (\bold{w}_{\ceil*{\frac{N}{2}}-1})_2 \bold{1}_1,\qquad
		\bold{w}_{\ceil*{\frac{N}{2}}} = (\bold{u}_{\ceil*{\frac{N}{2}}-1})_2 \bold{1}_1.
		\end{align}
		From Lemma \ref{Hassan_mini_3_lem:4}, we know that 
		\begin{align}\label{Hassan_mini_3_eq:circle}
		\bold{u}_{\ceil*{\frac{N}{2}}}\stackrel{\text{Lemma \ref{Hassan_mini_3_lem:4}}}{=} \bold{w}_{\ceil*{\frac{N}{2}}}\stackrel{\eqref{Hassan_mini_3_eq:square}}{=} (\bold{u}_{\ceil*{\frac{N}{2}}-1})_2 \bold{1}_1. 
		\end{align}
		Using the fact that $\bold{w}=T \bold{u}$ and Equation \eqref{Hassan_mini_3_eq:circle} we have that
		\begin{align*}
		\bold{w}_{\ceil*{\frac{N}{2}}-1} \stackrel{\eqref{Hassan_mini_3_eq:1.9}}{=}  T_1 \bold{u}_{\ceil*{\frac{N}{2}}-2}+ T_2 \,\bold{u}_{\ceil*{\frac{N}{2}}} \stackrel{\eqref{Hassan_mini_3_eq:circle}}{=}  T_1 \bold{u}_{\ceil*{\frac{N}{2}}-2}+ T_2 (\bold{u}_{\ceil*{\frac{N}{2}}-1})_2 \bold{1}_1. 
		\end{align*}
		Using \eqref{Hassan_mini_3_eq:1.9} we obtain that
		\begin{align}\label{Hassan_mini_3_eq:triangle2}
		\bold{u}_{\ceil*{\frac{N}{2}}-2} \leq \bold{w}_{\ceil*{\frac{N}{2}}-1} \leq (\bold{u}_{\ceil*{\frac{N}{2}}-1})_2 \bold{1}_1.
		\end{align}
		where the equality holds if and only if $(\bold{u}_{\ceil*{\frac{N}{2}}-2})_2=(\bold{u}_{\ceil*{\frac{N}{2}}-1})_2$. However, Lemma \ref{Hassan_mini_3_cor:2} implies that $\bold{u}_{\ceil*{\frac{N}{2}}-2} < \bold{u}_{\ceil*{\frac{N}{2}}-1}$ which immediately yields that $(\bold{u}_{\ceil*{\frac{N}{2}}-2})_2<(\bold{u}_{\ceil*{\frac{N}{2}}-1})_2$. Hence the inequalities in \eqref{Hassan_mini_3_eq:triangle2} are strict and thus
		\begin{align}\label{Hassan_mini_3_eq:hexagon}
		\bold{w}_{\ceil*{\frac{N}{2}}-1} \stackrel{\eqref{Hassan_mini_3_eq:triangle2}}{<} (\bold{u}_{\ceil*{\frac{N}{2}}-1})_2 \bold{1}_1 \stackrel{\eqref{Hassan_mini_3_eq:square}}{=} \bold{w}_{\ceil*{\frac{N}{2}}}.
		\end{align}
		Recalling \eqref{Hassan_mini_3_eq:square} we obtain that
		$\bold{y}_{\ceil*{\frac{N}{2}}} \stackrel{\eqref{Hassan_mini_3_eq:square}}{=}(\bold{w}_{\ceil*{\frac{N}{2}}-1})_2 \bold{1}_1 \stackrel{\eqref{Hassan_mini_3_eq:hexagon}}{<} \bold{w}_{\ceil*{\frac{N}{2}}}$,
		which completes the proof.
	\end{proof}
	
	\bibliographystyle{plain}
	\bibliography{Hassan_mini_3}
\end{document}